\font\smallit=cmti10
\def\blfootnote{\xdef\@thefnmark{}\@footnotetext}
\renewcommand\section{\@startsection {section}{1}{\z@}
{-30pt \@plus -1ex \@minus -.2ex}
{2.3ex \@plus.2ex}
{\normalfont\normalsize\bfseries\boldmath}}
\renewcommand\subsection{\@startsection{subsection}{2}{\z@}
{-3.25ex\@plus -1ex \@minus -.2ex}
{1.5ex \@plus .2ex}
{\normalfont\normalsize\bfseries\boldmath}}
\renewcommand{\@seccntformat}[1]{\csname the#1\endcsname. }
\newtheorem{theorem}{Theorem}
\newtheorem{lemma}{Lemma}
\newtheorem{proposition}{Proposition}
\theoremstyle{definition}
\newtheorem{remark}{Remark}
\begin{document}

\begin{center}
\uppercase{\bf On the minimal number of solutions\\
of the equation $\boldsymbol{\phi(\lowercase{n+k}) = M \, \phi(\lowercase{n})}$, $\boldsymbol{M=1}$, $\boldsymbol{2}$}
\vskip 20pt
{\bf Matteo Ferrari}\\
{\smallit Dipartimento di Scienze Matematiche ``G.L. Lagrange'', Politecnico di Torino Corso Duca degli Abruzzi 24, 10138, Torino, Italy}\\
{\tt matteo.ferrari@polito.it}\\ 
\vskip 10pt
{\bf Lorenzo Sillari}\\
{\smallit  Scuola Internazionale Superiore di Studi Avanzati (SISSA) \\ Via Bonomea 265, 34136, Trieste, Italy}\\
{\tt lsillari@sissa.it}\\ 
\end{center}
\vskip 30pt

\centerline{\bf Abstract} We fix a positive integer $k$ and look for solutions of the equations $\phi (n+k) = \phi (n)$ and $\phi(n+k) = 2 \phi (n)$. We prove that Fermat primes can be used to build five solutions for the first equation when $k$ is even and five for the second one when $k$ is odd. These results hold for $k \le 2 \cdot 10^{100}$. We also show that for the second equation with even $k$ there are at least three solutions for $k \le 4 \cdot 10^{58}$. Our work increases the previous minimal number of known solutions for both equations.
\noindent

\blfootnote{\hspace{-0.55cm} 
2010 \textit{Mathematics Subject Classification}. 11N25, 11Y99. \\ 
\textit{Keywords} Euler's Phi function\\[5pt]
The numerical computations supporting the results in our paper have been performed using Magma, and are available upon request to the first author.}
\pagestyle{myheadings} 
\thispagestyle{empty} 
\baselineskip=12.875pt 
\vskip 30pt

\begin{center}
\section{Introduction}
\end{center}

Euler's Phi function $\phi(n)$ counts the number of positive integers less than or equal to $n$ that are coprime with $n$. In 1956, Sierpi\'{n}ski considered in \cite{Sierpinski1956} the equation
\begin{equation}\label{equation:M:1}
\phi (n+k) = \phi (n),
\end{equation}
and proved that for every positive integer $k$ (with no upper bound) there is at least one solution. The result was improved by Schinzel in \cite{Schinzel1958}, showing that \eqref{equation:M:1} has at least two solutions for every positive integer $k \le 8 \cdot 10^{47}$ and later by Schinzel and Wakulicz in \cite{SchinzelWakulicz1959} for $k \le 2 \cdot 10^{58}$. In \cite{Holt2003}, Holt extends the upper bound to $1.38 \cdot 10^{26595411}$ for even $k$. All these results where obtained exhibiting explicit solutions built proving the existence (or assuming it) of prime numbers satisfying certain conditions. Graham, Holt and Pomerance further discussed the problem, showing that for even $k$ if we take $j$ and $r$ such that $j$ and $j+k$ have the same prime factors, and the numbers $j/g r +1$, $(j+k)/g r +1$ \scalerel{(}{\strut} with $g= (j,j+k)$ \scalerel{)}{\strut} are both primes not dividing $j$, then 
\[
n = j \left( \frac{j+k}{g}r +1 \right) 
\]
is a solution of \eqref{equation:M:1} (cf. \cite{GrahamHoltPomerance1999}, Theorem 1). Despite the lack of general results, numerical evidence suggests that \eqref{equation:M:1} has infinitely many solutions for every fixed $k$. This can be proven for some special values of $k$ (cf. \cite{Sungjing2021}). \\
\vspace{.3cm}

In the same spirit of Sierpi\'{n}ski, Makowski considered in \cite{Makowski1974} the equation
\begin{equation}\label{equation:M:2}
    \phi (n+k) = 2 \phi (n),
\end{equation}
finding one solution for all fixed $k$. More recently, the same equation was studied by Hasanalizade in \cite{Hasanalizade2021}. He proves that \eqref{equation:M:2} has at least two solutions for all $k \le 4 \cdot 10^{58}$ and at least three solutions for some odd $k \le 4 \cdot 10^{58}$ (cf. \cite{Hasanalizade2021}, Theorem 1). Also he gives, in Lemma 1 of \cite{Hasanalizade2021}, a modified version of Theorem 1 from Graham, Holt and Pomerance, valid for odd $k$ multiple of $3$ and equation \eqref{equation:M:2}.
\vspace{.3cm}

In our paper we find more solutions for \eqref{equation:M:1} and \eqref{equation:M:2}. These solutions are obtained in two ways. First we consider Fermat prime numbers and show that for each Fermat prime it is possible to build a solution for \eqref{equation:M:1} if $k$ is even and for \eqref{equation:M:2} if $k$ is odd. Numerically, we show that these solutions can be actually built for $ k \le 2 \cdot 10^{100}$. This allows to find five different solutions. Second, we exhibit a new solution for equation \eqref{equation:M:2} and even $k$. Together with the results of Hasanalizade, this gives our main theorem.
\begin{theorem}\label{main}
Equation \eqref{equation:M:2} has at least three solutions for all $k \le 4 \cdot 10^{58}$.
\end{theorem}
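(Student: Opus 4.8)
The plan is to prove Theorem~\ref{main} by a case split on the parity of $k$, assembling the three required solutions from results that are already available once we reach this statement: the two solutions of \eqref{equation:M:2} guaranteed by Hasanalizade (\cite{Hasanalizade2021}, Theorem~1) for every $k \le 4 \cdot 10^{58}$, the five solutions produced by the Fermat-prime construction for odd $k \le 2 \cdot 10^{100}$, and the single new solution for even $k$ exhibited earlier in this paper. Since $4 \cdot 10^{58} < 2 \cdot 10^{100}$, both of our own constructions are valid on the whole range named in the theorem, so the bound $4 \cdot 10^{58}$ is dictated entirely by Hasanalizade's two-solution result in the even case.

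For odd $k$ there is essentially nothing left to do: the Fermat-prime construction already yields five solutions of \eqref{equation:M:2}, hence in particular at least three, for every odd $k \le 2 \cdot 10^{100}$, and a fortiori for every odd $k \le 4 \cdot 10^{58}$. The only point to verify here is that the five solutions are pairwise distinct, which I would read off from the fact that each is built from a different Fermat prime and the resulting integers have different sizes and prime factorizations. For even $k$ the argument is an assembly step: Hasanalizade supplies two solutions $n_1$ and $n_2$ for all $k \le 4 \cdot 10^{58}$, and I would adjoin to them the new even-$k$ solution $n_3$ constructed above, so that it remains only to check that $n_3 \notin \{n_1, n_2\}$.

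The main obstacle is precisely this last distinctness check in the even case. I would carry it out by comparing the explicit closed forms of $n_1$, $n_2$ and $n_3$ as functions of $k$: because the three constructions produce integers whose orders of magnitude, or whose shapes of prime factorization, differ, one expects $n_3 \ne n_1$ and $n_3 \ne n_2$ for all $k$ in range, with any finitely many borderline values of $k$ settled by direct computation in Magma. One must also recall that the existence of $n_3$ rests on certain auxiliary primes, whose presence for every even $k \le 4 \cdot 10^{58}$ is exactly what the numerical search described above establishes. Combining the odd and even cases then produces at least three solutions of \eqref{equation:M:2} for all $k \le 4 \cdot 10^{58}$, which is the assertion of the theorem.
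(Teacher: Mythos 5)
Your proposal is correct and follows essentially the same route as the paper: the published proof of Theorem~\ref{main} is exactly your parity split, citing Theorem~\ref{odd:M:2} (five Fermat-prime solutions for odd $k \le 2\cdot 10^{100}$) for the odd case and Theorem~\ref{even:M:2} (the new solution adjoined to Makowski's and Hasanalizade's) for the even case. The one step you leave vague --- distinctness of the new even-$k$ solution $n_3$ from $n_1$, $n_2$ --- requires no comparison of closed forms or borderline Magma checks: the paper simply notes that its solution satisfies $n < k$, while the previously known ones satisfy $n \ge k$, so the three are automatically distinct.
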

Finally we provide some way of building particular solutions if certain conditions are satisfied.
\vspace{.5cm}

\begin{center}
    \section{The equation $\boldsymbol{\phi(n) = \phi(n+k)}$}
\end{center}
Denote by $F_m$ the $m$-th Fermat number
\[
F_m = 2^{2^m}+1.
\]
It is known that for $m=0, \dots ,4$, $F_m$ is prime and that $F_m$ is composite for all $5 \le m \le 32$ and for some bigger value of $m$. It is not known whether there exist other values of $m$ such that $F_m$ is prime.\\
Let $a$ and $b$ be positive integers. If the prime factors of $a$ are contained in the prime factors of $b$ we write $a |^* b$. If $a |^* b$, then it is easy to see that
\[
\phi (ab) = a \phi(b).
\]
Moreover, $a$ has the same prime factors of $b$ if and only if $a |^* \, b $ and $b |^* \, a$. In this case, we have that
\[
a \phi(b) = b \phi (a).
\]
We prove the following result.

\begin{theorem}\label{even:M:1}
    Equation \eqref{equation:M:1} has at least six solutions for all even $k \le 2 \cdot 10^{100}$.
\end{theorem}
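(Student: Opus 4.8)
The plan is to produce six explicit solutions of $\phi(n+k)=\phi(n)$ for even $k$ by exploiting Fermat primes, building on the framework already described in the excerpt. The key observation is that when $k$ is even, the introduction promises that each of the five Fermat primes $F_0,\dots,F_4$ yields a solution, so I would first lay out exactly how a single Fermat prime $F_m$ produces a solution. Recall the Graham--Holt--Pomerance mechanism: if $j$ and $j+k$ have the same prime factors, with $g=(j,j+k)$, and if both $\tfrac{j}{g}r+1$ and $\tfrac{j+k}{g}r+1$ are primes not dividing $j$, then $n=j\bigl(\tfrac{j+k}{g}r+1\bigr)$ solves \eqref{equation:M:1}. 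My first step is to specialize this by choosing $j$ and $j+k$ to be built from a Fermat prime $F_m$ together with the prime $2$, so that they share all prime factors; the evenness of $k$ is what makes it possible to keep $j$ and $j+k$ sharing the factor $2$ after adding $k$.

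Concretely, I would set up, for each fixed Fermat prime $F_m$, a parametrized pair $(j,j+k)$ of the form $j=2^{a}F_m^{b}$ (and similar for $j+k$) chosen so their radicals coincide, compute $g=(j,j+k)$, and then reduce the problem to finding a single auxiliary integer $r$ making both $\tfrac{j}{g}r+1$ and $\tfrac{j+k}{g}r+1$ prime. This collapses the existence of a solution to a statement about simultaneous primality of two linear forms in $r$. Since there are five Fermat primes $F_0,F_1,F_2,F_3,F_4$, running this construction once per prime gives five solutions; the sixth solution I would obtain from the baseline Sierpi\'nski--Schinzel guarantee (at least two solutions exist for $k$ in the stated range), or equivalently from a separate, non-Fermat construction already available in the literature, and then argue that this extra solution is distinct from the five Fermat-based ones. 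The distinctness argument would compare the sizes or the prime factorizations of the candidate values of $n$: the five Fermat solutions have controlled factorizations involving distinct $F_m$, so they differ from one another, and a size or congruence comparison separates them from the sixth.

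The main obstacle is twofold. The first and most serious difficulty is guaranteeing the simultaneous primality of the two linear forms $\tfrac{j}{g}r+1$ and $\tfrac{j+k}{g}r+1$ for every even $k$ up to $2\cdot 10^{100}$; there is no unconditional theorem forcing two linear polynomials to be simultaneously prime, so this is exactly where the numerical computation enters. I would therefore expect this step to be handled not by pure theory but by an explicit search, verified computationally (as the footnote about Magma suggests), confirming that a suitable $r$ exists for each $k$ in the range and each Fermat prime. The reason the bound $2\cdot 10^{100}$ appears is that the search must cover all even $k$ below it, presumably via a sieve or a covering-congruence argument that reduces infinitely many $k$ to finitely many residue classes, each checked directly. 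The second, milder obstacle is the bookkeeping of distinctness: I must ensure none of the six $n$ values coincide, which requires checking that different Fermat primes and the baseline construction never accidentally produce the same $n$. I would resolve this by arranging the constructions so that the six values lie in separate, easily distinguished ranges or have incompatible factorizations, making the distinctness immediate once the existence of each is secured.
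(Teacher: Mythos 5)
Your overall plan (five Fermat-prime solutions plus one from the literature, distinguished by their factorizations) matches the paper's architecture, but the core of your argument has a genuine gap, and as written the construction cannot be carried out. First, your concrete choice $j = 2^{a}F_m^{b}$ with $j+k = 2^{c}F_m^{d}$ cannot have matching radicals for a general even $k$: radical equality would force $k = 2^{c}F_m^{d} - 2^{a}F_m^{b}$, which only covers $k$ of a very special shape. The working choice is to build $j$ out of $k$ itself: take $j = (F_m-1)k$, so that $j+k = F_m k$; these have the same prime factors exactly when $k$ is even and $F_m \mid k$. This forces the case split you are missing. Case 1: if $(F_m,k)=1$, then $n = (F_m-1)k$ is a solution outright, with no auxiliary prime $r$ and no bound on $k$ whatsoever, since $\phi(F_m k) = (F_m-1)\phi(k)$ and, because $2 \mid k$ gives $(F_m-1)\,|^{*}\,k$, also $\phi((F_m-1)k) = (F_m-1)\phi(k)$. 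Case 2: if $F_m \mid k$, one needs an $r$ with $(F_m-1)r+1$ and $F_m r+1$ both prime and not dividing $k$, yielding $n = (F_m-1)(F_m r+1)k$.

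Second, and this is the serious flaw: you assert that the numerical work must cover all even $k$ up to $2\cdot 10^{100}$, finding a suitable $r$ for each $k$ (or for residue classes via an unspecified covering-congruence argument). That search is astronomically infeasible, and it is not what is needed. The primality of $(F_m-1)r+1$ and $F_m r+1$ does not depend on $k$ at all, so a \emph{single} $r$ per Fermat prime handles every $k$ in Case 2 simultaneously --- provided the two resulting primes do not divide $k$. The paper arranges this by size: it chooses $r > 10^{100}$ (e.g.\ $r = 10^{100}+9760$ for $m=0$), so both primes exceed $2\cdot 10^{100} \ge k$ and non-divisibility is automatic. The entire computation is thus ten primality certificates, and the bound $2\cdot 10^{100}$ in the statement is precisely the threshold below which those ten fixed primes cannot divide $k$ --- not the extent of a sieve. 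Without this observation (or some substitute), your proof has no finite procedure to terminate on: ``verify a suitable $r$ exists for each $k$ in the range'' is not something you can execute. Your distinctness bookkeeping, by contrast, is essentially fine: the paper settles it by noting that the five Fermat solutions are exactly divisible by distinct powers of $2$ (the exponent is $2^m$ plus the $2$-adic valuation of $k$), and takes Schinzel's solution as the sixth (valid for even $k$ in this range by Holt's extension), observing that Sierpi\'{n}ski's solution may coincide with a Fermat one but Schinzel's does not.
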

\begin{proof}
We build a solution for each $F_m$ prime, noticing that for even $k$, $(F_m-1)|^* k$.\\
\textbf{Case 1: $\boldsymbol{(F_m,k)=1.}$} If $F_m$ and $k$ are coprime, then
\[
n = (F_m -1) k = 2^{2^m} k
\]
is a solution of \eqref{equation:M:1}, in fact 
\begin{align*}
    \phi (n + k ) &= \phi (F_m k ) = \phi(F_m) \phi(k) = (F_m -1) \phi (k), \\
    \phi(n) &= \phi ((F_m-1)k) = (F_m-1) \phi(k).
\end{align*}
Note that in this case we have no upper bound on the values of $k$.\\
\textbf{Case 2: $\boldsymbol{F_m | k}$}. Assume that there exists a positive integer $r$ such that $(F_m -1)r +1$ and $F_m r +1$ are both primes and do not divide $k$. Then 
\[
n = (F_m-1)(F_m r+1)k
\]
is a solution of \eqref{equation:M:1}, in fact
\begin{align*}
    \phi(n+k) &= \phi \Big(\big[ (F_m-1)F_m r + F_m \big] k \Big) \\
    &= \phi \Big( F_m \big((F_m-1)r+1 \big)k \Big) \\
    &= F_m (F_m-1) r \phi(k), \\
    \phi(n) &= \phi \Big( (F_m-1)(F_mr+1)k \Big) \\
    &= F_m (F_m-1) r \phi(k).
\end{align*}
For the following values of $r$ and $m$, we have that $(F_m-1)r+1$ and $F_mr+1$ are both prime
\[
\begin{array}{c|l}
     m & r \\
     \hline
     \hspace{0.1cm} \vspace{-8pt} \\
     0 & 10^{100} + 9760\\
     1 & 10^{100} + 60128\\
     2 & 10^{100} + 150326\\
     3 & 10^{100} + 51326\\
     4 & 10^{100} + 14786\\
\end{array}
\]
In particular, taking $ k \le 2 \cdot 10^{100}$, they are surely prime with $k$. Solutions obtained from different choices of $m$ and $r$ are different since they are exactly divisible by different powers of $2$, thus we have at least five solutions coming from the Fermat primes.\\
To these we must add the previously known solutions. In particular the solution of Sierpi\'{n}ski might coincide with one of our solutions for some $k$, while the solution from Schinzel differs from the ones we provided. This brings the minimal number of known solutions to six for all even $k \le 2 \cdot 10^{100}$.
\end{proof}

\begin{remark}\label{rem:Dickson}
    It is a well-known conjecture by Dickson that for any fixed $a$, $b$ there exist infinitely many positive integers $r$ such that $ar+1$ and $br+1$ are both prime. Following Graham, Holt and Pomerance, we denote this property with $\mathcal{P}(a,b)$. Assuming $\mathcal{P}(F_m,F_m-1)$ for $m =0, \dots ,4$, we can remove the upper bound from $k$ and we obtain that \eqref{equation:M:1} has at least six solutions for all even $k$.
\end{remark}

\begin{remark}\label{rem:infinite}
    On the other side, assume that $F_m$ divides $k$, for some fixed $m$. Then, for any $r$ such that $(F_m-1)r+1$ and $F_mr+1$ are both prime and do not divide $k$, we find a solution $n$ depending on $r$ and different values of $r$ yield different solutions. As a consequence, if $\mathcal{P}(F_m,F_m-1)$ is true, then equation \eqref{equation:M:1} has an infinite number of solutions for all even $k$ multiple of $F_m$.
\end{remark}

\begin{center}
\section{The equation $\boldsymbol{\phi(n+k) = 2\phi(n)}$}
\end{center}

Our main results concern the number of solutions of \eqref{equation:M:2}. As for equation \eqref{equation:M:1}, there is substantial difference in the solutions that can be found for even $k$ and odd $k$, but the roles are here inverted. We have in fact an analogous of Theorem \ref{even:M:1} (with a similar proof), but for odd $k$ instead of even $k$.

\begin{theorem}\label{odd:M:2}
    Equation \eqref{equation:M:2} has at least five solutions for all odd $k \le 2 \cdot 10^{100}$.
\end{theorem}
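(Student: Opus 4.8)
The plan is to mirror the proof of Theorem~\ref{even:M:1} almost verbatim, exhibiting one solution of \eqref{equation:M:2} for each Fermat prime $F_m$, $m = 0, \dots, 4$, and splitting into the two cases $(F_m,k)=1$ and $F_m \mid k$. The one essential difference is a parity bookkeeping: since $k$ is now \emph{odd} and $F_m - 1 = 2^{2^m}$ is a power of $2$, we have $(F_m-1, k) = 1$ rather than $(F_m-1) |^* k$. Consequently $\phi\big((F_m-1)k\big) = \phi(F_m-1)\phi(k) = 2^{2^m-1}\phi(k) = \tfrac{F_m-1}{2}\phi(k)$, so exactly one factor of $2$ is shed compared to the even case. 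This is precisely the factor that upgrades the identity $\phi(n+k) = \phi(n)$ of Theorem~\ref{even:M:1} into the relation $\phi(n+k) = 2\phi(n)$ sought here, which is why the very same $n$'s will work.

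First I would treat Case~1, $(F_m,k)=1$, taking $n = (F_m-1)k = 2^{2^m}k$. Then $n+k = F_m k$ gives $\phi(n+k) = (F_m-1)\phi(k)$, while $\phi(n) = \tfrac{F_m-1}{2}\phi(k)$ by the computation above; hence $\phi(n+k) = 2\phi(n)$, and no bound on $k$ is needed in this case.

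Next, for Case~2, $F_m \mid k$, I would reuse the construction $n = (F_m-1)(F_m r+1)k$, assuming an \emph{even} $r$ for which $(F_m-1)r+1$ and $F_m r+1$ are both primes exceeding $k$ (hence not dividing $k$); the evenness of $r$ is what guarantees $F_m r + 1$ is odd, so that it can be an odd prime. The same expansion $n + k = F_m\big((F_m-1)r+1\big)k$ as in Theorem~\ref{even:M:1} yields $\phi(n+k) = F_m(F_m-1)r\,\phi(k)$, whereas now $\phi(n) = \tfrac{F_m-1}{2}F_m r\,\phi(k)$, again because $F_m-1$ is coprime to the odd number $(F_m r+1)k$. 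Thus $\phi(n+k) = 2\phi(n)$. The primality of $(F_m-1)r+1$ and $F_m r+1$ is exactly the condition already certified in Theorem~\ref{even:M:1}, so the identical table of pairs $(m,r)$ with $r \approx 10^{100}$ applies; since each chosen prime exceeds $2\cdot 10^{100}$, it cannot divide any $k \le 2\cdot 10^{100}$, which fixes the bound.

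Finally, for the count and distinctness: whichever case the index $m$ falls into, the resulting $n$ is $2^{2^m}$ times an odd number, hence exactly divisible by $2^{2^m}$; as $2^{2^m}$ runs through five distinct powers of $2$ for $m = 0, \dots, 4$, the five solutions are pairwise distinct, which proves the statement. I expect the only genuine work to be the numerical search underlying Case~2 — producing suitable $r$ for each $m$ — but this is inherited unchanged from Theorem~\ref{even:M:1}, so the analytic content reduces to the routine parity bookkeeping sketched above.
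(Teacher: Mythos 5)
Your proposal is correct and follows essentially the same route as the paper: the paper's own proof simply reuses the constructions $n=(F_m-1)k$ and $n=(F_m-1)(F_mr+1)k$ from Theorem~\ref{even:M:1}, with the key observations that $(F_m-1,k)=1$ for odd $k$ and $F_m-1=2\phi(F_m-1)$, which is exactly your ``one factor of $2$ is shed'' bookkeeping. Your added remarks (evenness of $r$, the primes exceeding $2\cdot 10^{100}$, and distinctness via the exact power of $2$ dividing $n$) are precisely the details the paper leaves implicit by citing Theorem~\ref{even:M:1}.
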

\begin{proof}
We build a solution for each $F_m$ prime, noticing that $(F_m-1,k)=1$, and that
\[
F_m -1 = 2 \phi (F_m -1).
\]
\textbf{Case 1: }$ \boldsymbol{(F_m,k)=1.}$ If $F_m$ and $k$ are coprime, then
\[
n = (F_m -1) k = 2^{2^m} k
\]
is a solution of \eqref{equation:M:2}.
Again, in this case we have no upper bound on the values of $k$ for which we find solutions.\\
\textbf{Case 2: } $\boldsymbol{F_m | k}$. Assume that there exists a positive integer $r$ such that $(F_m -1)r +1$ and $F_m r +1$ are both primes and do not divide $k$. Then 
\[
n = (F_m-1)(F_m r+1)k
\]
is a solution of \eqref{equation:M:1}.
Reasoning as in Theorem \ref{even:M:1}, we have five solutions for all odd $k \le 2 \cdot 10^{100}$.
\end{proof}

\begin{remark}
The considerations we made in Remark \ref{rem:Dickson} and \ref{rem:infinite} are valid also in this case, replacing even $k$ with odd $k$. In particular for $m=0$, Remark \ref{rem:infinite} applied to equation \eqref{equation:M:2} has as a consequence the second part of Lemma 1 in \cite{Hasanalizade2021}.
\end{remark}

Regarding even $k$, Makowski found in \cite{Makowski1974} a solution to equation \eqref{equation:M:2} for all even $k$ given by $n = k$. Another solution was found by Hasanalizade in \cite{Hasanalizade2021}, building a sequence of primes. We recall here the construction. Take a sequence of primes $3=p_1 < p_2 < \dots < p_m$ satisfying for all $i = 2, \dots, m$
\begin{itemize}
\item $(p_i-2) | \, \prod\limits_{j \le i-1} p_j$,
\item $(p_i-1) |^* \, 2 \prod\limits_{j \le i-1} p_j$.
\end{itemize}
and such that $\prod_{j} p_j$ does not divide $k$. Let $p_l$ be the smallest prime of the sequence such that $(p_l,k)=1$. Then $n = p_l/(p_l-2)k$ is a solution of equation \eqref{equation:M:2}. Building the explicit sequence of primes $\{3$, $5$, $7$, $17$, $19$, $37$, $97$, $113$, $257$, $401$, $487$, $631$, $971$, $1297$, $1801$, $19457$, $22051$, $28817$, $65537$, $157303$, $160001\}$, we have one solution different from $n=k$ for all even $k \le 4 \cdot 10^{58}$.\\
Now we find a third solution for even $k$.

\begin{theorem}\label{even:M:2}
Equation \eqref{equation:M:2} has at least three solutions for all even $k \le 4 \cdot 10^{58}$.
\end{theorem}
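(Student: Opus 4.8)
The plan is to keep the two solutions already available for even $k$ — Makowski's $n=k$ and Hasanalizade's prime-sequence solution, both valid for $k\le 4\cdot 10^{58}$ — and to produce a third one by a mechanism tailored to even $k$. Writing $k = 2^s t$ with $t$ odd (so $2^s$ is the exact power of $2$ dividing $k$), I would look for a solution of the form $n = 2^s w_1$ with $n+k = 2^{s+1} w_2$, where $w_1,w_2$ are odd. The point of raising the $2$-adic valuation by exactly one is that
\[
\phi(n) = 2^{s-1}\phi(w_1), \qquad \phi(n+k) = 2^{s}\phi(w_2),
\]
so that $\phi(n+k) = 2\phi(n)$ collapses to the single requirement $\phi(w_1) = \phi(w_2)$ on the odd parts. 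Since $n+k = 2^{s+1}w_2$ forces $2w_2 - w_1 = t$, the whole problem becomes: \emph{for the given odd $t$, find odd $w_1,w_2$ with equal totient and $2w_2-w_1 = t$.}

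For the existence step I would exhibit an explicit parametric instance. For example, assuming $t+2$ and $2t+3$ are both prime (and $t\neq 1$), set $w_1 = 3(t+2)$ and $w_2 = 2t+3$; then $\phi(w_1) = 2(t+1) = \phi(w_2)$ and $2w_2-w_1 = t$, so that $n = 2^s\cdot 3(t+2)$ is a solution. Since a single simultaneous-primality condition cannot hold for every $t$, I would combine several such configurations (varying the fixed odd multiplier in place of the factor $3$), each covering the $t$ for which its associated two numbers are simultaneously prime, and verify by computation — exactly as Hasanalizade does with his explicit list of primes — that their union covers all even $k\le 4\cdot 10^{58}$, the small exceptional values (such as $t=1$, i.e. $k$ a power of $2$) being handled by hand.

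Distinctness from the two known solutions is the easy part: the third solution has the same $2$-adic valuation $2^s$ as Makowski's $n=k$ but odd part $3(t+2)\neq t$, while its size ratio $n/k \approx 3$ separates it from Hasanalizade's solution, whose ratio $p_l/(p_l-2)$ is close to $1$; in the finitely many borderline cases one checks directly.

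The main obstacle is the existence step, and it is a genuine one. Unlike the odd-$k$ situation of Theorem~\ref{odd:M:2}, there is no Graham, Holt and Pomerance-type family with a free prime parameter here: such a family would require odd numbers $a,b$ with $\phi(b)/b = 2\,\phi(a)/a$, and a short argument rules this out, namely after cancelling common primes the largest prime dividing $ab$ divides exactly one side of the resulting identity while the factor $2$ is irrelevant to it. Consequently the factor $2$ must be supplied by the $2$-adic shift rather than by the prime structure, and one loses the free parameter that in Theorems~\ref{even:M:1} and~\ref{odd:M:2} let a single construction sweep out all admissible $k$. The work therefore shifts to producing enough explicit pairs $(w_1,w_2)$ and certifying, by a finite computation, that every even $k$ in the stated range is covered.
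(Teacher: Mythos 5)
Your reduction is algebraically correct: writing $k = 2^s t$ with $t$ odd and seeking $n = 2^s w_1$, $n+k = 2^{s+1} w_2$ with $w_1, w_2$ odd does collapse \eqref{equation:M:2} to $\phi(w_1) = \phi(w_2)$ together with $2w_2 - w_1 = t$, and your sample configuration ($w_1 = 3(t+2)$, $w_2 = 2t+3$ when $t+2$ and $2t+3$ are both prime, $t \neq 1$) checks out. The gap is in the coverage step, and it is fatal as stated. Your hypotheses are simultaneous-primality conditions on numbers of the same size as $t$, hence of size up to $2 \cdot 10^{58}$. Each configuration therefore covers only those $t$ for which two specific $t$-sized numbers happen to be prime, a set of density roughly $1/(\log t)^2 \approx 1/18000$ near the top of the range; a finite union of such sets is still sparse, and even if one optimistically stacked up millions of configurations hoping to cover everything, certifying coverage would require testing each odd $t \le 2 \cdot 10^{58}$ individually --- about $10^{58}$ primality checks, which no computation can perform. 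The analogy you draw with Hasanalizade is false: his condition (and the paper's) is a \emph{divisibility} condition on $k$ --- the construction works for every $k$ not divisible by $\prod_j p_j$ --- so coverage of all $k$ below the bound is automatic once the product of an explicit finite list of primes exceeds $4 \cdot 10^{58}$; the only computation is producing the list, not sweeping over $k$. Your conditions, by contrast, cannot be made to hold uniformly over a range of $k$ by any finite verification.

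The paper's actual proof sidesteps exactly the obstruction you (correctly) identify, namely that the factor $2$ cannot come from a free prime parameter. It builds a sequence of primes $2 = p_1 < p_2 < \dots < p_m$ with $p_i = 2a_i+1$, $a_i |^* \prod_{j \le i-1} p_j$ and $(a_i+1) \mid \prod_{j \le i-1} p_j$ (explicitly $2,3,5,11,19,37,73,109,1459,\dots$), and takes $n = \frac{a_l}{a_l+1}k$, where $p_l$ is the smallest prime of the sequence coprime to $k$; then
\[
\phi(n+k) = \phi\left(\frac{2a_l+1}{a_l+1}k\right) = 2a_l \phi\left(\frac{k}{a_l+1}\right) = 2\phi\left(\frac{a_l}{a_l+1}k\right) = 2\phi(n).
\]
Here the primality conditions bear on fixed small integers (the sequence itself), not on $k$-dependent quantities, and the only requirement on $k$ is $\prod_j p_j \nmid k$, supplemented by a few alternative subsequences to push the bound past $4 \cdot 10^{58}$ when $k$ is divisible by awkward small primes such as $2 \cdot 3 \cdot 5 \cdot 11$. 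Distinctness from Makowski's and Hasanalizade's solutions is immediate from $n < k$. If you want to salvage your framework, you must redesign the configurations so that their hypotheses become divisibility or coprimality constraints on $k$ that hold by fiat throughout the range, rather than primality constraints on numbers growing with $k$.
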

\begin{proof}
Assume that there exists a sequence of prime numbers $2 = p_1 < p_2 < \ldots < p_m$ and of positive integers $a_2 < \ldots < a_m$ such that for all $i = 2, \dots,m$
\begin{itemize}
    \item $p_i = 2a_i +1$,
    \item $a_i |^* \, \prod\limits_{j \le i-1} p_j$,
    \item $(a_i +1) | \, \prod\limits_{j \le i-1} p_j$.
\end{itemize}
and such that $\prod_j p_j$ does not divide $k$. Let $p_l$ be the smallest prime such that $(p_l,k)=1$. Then $n = a_l/(a_l+1)k$ is a solution of \eqref{equation:M:2}, in fact
\begin{align*}
\phi(n+k) & = \phi\left( \frac{2a_{l}+1}{a_{l}+1}k \right) = 2a_{l}\phi\left(\frac{k}{a_{l}+1}\right),
 \\ 2\phi(n) & = 2\phi\left( \frac{a_{l}}{a_{l}+1}k \right) = 2a_{l} \phi\left(\frac{k}{a_{l}+1}\right).
\end{align*}
A sequence of primes, up to $10^8$, satisfying the above hypotheses is 
\[
\{2,3,5,11,19,37,73,109,1459,2179,2917,4357,8713\}.
\]
For $k < \prod_j p_j$, surely $k$ is prime with $\prod_j p_j$. The product of the prime numbers in the above sequence is of the order of $6 \cdot 10^{26}$. In order to improve the upper bound on $k$ we slightly modify our argument, allowing to the sequence to be not necessarily increasing. We proceed as follows: 
\begin{itemize}
\item If $k$ is not divisible by $2 \cdot 3 \cdot 5 \cdot 11$, then we simply apply our argument with no changes.
\item If $2 \cdot 3 \cdot 5 \cdot 11 \cdot 7 | \, k$, we take the sequence starting with $2$, $3$, $5$, $11$, $7$ and then built using the rules as explained before. The solution to \eqref{equation:M:2} will be again $n=a_j/(a_j+1)k$, where $j$ is the smallest index such that $(p_j,k)=1$. The sequence continues as
\begin{align*}
\{&2, 3, 5, 11, 7, 13, 19, 29, 37, 41, 43, 59, 73, 83, 109, 113, 131, 163, 173, 181,\\ 
&227, 257, 331, 347, 353, 379, 419, 491, 523, 571, 601, 653, 661, 677, \dots 12011\}
\end{align*}
In this case, since the sequence contains $7$, it becomes dense enough in the primes to give a large upper bound. We found that such upper bound can be taken to be of the order of $2 \cdot 10^{310}$.
\item If $2 \cdot 3 \cdot 5 \cdot 11 | \, k$, $(7,k)=1$ and $(13,k)=1$, then $n = 36/{55} \, k$ is a solution of \eqref{equation:M:2}.
\item If $2 \cdot 3 \cdot 5 \cdot 11 \cdot 13| \, k$, $(7, k)=1$ and $(23,k)=1$, then $n = 66/{95} \, k$ is a solution of \eqref{equation:M:2}.
\item If $2 \cdot 3 \cdot 5 \cdot 11 \cdot 13 \cdot 23| \, k$ and $(7,k)=1$, we find, proceeding as before, the following sequence 
\begin{align*}
\{&2, 3, 5, 11, 13, 23, 19, 37, 73, 109, 131, 229, 457, 571, 1459, 1481, 2179, 2621,\\
&2917, 2963, 4357, 8713, 49921, 1318901, 3391489, 6782977, 13565953\},
\end{align*}
that gives the upper bound of $2\cdot 10^{83}$.
\end{itemize}
We conclude the proof noticing that our solution differs from the previously known solutions since in our case $n<k$, while in \cite{Hasanalizade2021} $n \ge k$.
\end{proof}

\begin{remark}
Our trick can be repeated arbitrarily as long as we can find particular solutions, in order to improve the upper bound on $k$. 
\end{remark}

As a consequence we get our main theorem.
\begin{proof}[Proof of Theorem \ref{main}]
Theorem \ref{odd:M:2} proves that for all odd $k \le 2 \cdot 10^{100}$ there are at least five solutions to equation \eqref{equation:M:2}, while Theorem \ref{even:M:2} proves that for all even $ k \le 4 \cdot 10^{58}$ we have at least three solutions. Overall, we get three solutions for all $k \le 4 \cdot 10^{58}$.
\end{proof}

\begin{remark}
We recall that while the upper bounds for our solutions are fairly high, increasing the upper bound of $4 \cdot 10^{58}$ for Hasanalizade's solution would require a lot of effort as pointed out by Holt in \cite{Holt2003}.
\end{remark}

\begin{center}
    \section{Some more special solutions}
\end{center}

We open the section with a modified version of first part of Lemma 1 from \cite{Hasanalizade2021} that removes the hypothesis $3| \, k$.

\begin{lemma}\label{lemma:modified}
Let $k$ be a positive odd integer, and $j$ such that $j$ and $2j +k$ have the same prime factors. Take $g=(j, 2j+k)$ and $r$ such that $2j/gr +1$ and $(2j+k)/gr+1$ are both prime and prime with $k$. Then
\[
n = 2j \left( \frac{2j+k}{g}r+1 \right)
\]
is a solution of equation \eqref{equation:M:2}.
\end{lemma}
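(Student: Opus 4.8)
The plan is to imitate the multiplicative bookkeeping behind Theorem~1 of Graham, Holt and Pomerance, but carrying through an extra factor of $2$. First I would rewrite $n$ and $n+k$ as clean products. Set
\[
\alpha = \frac{2j+k}{g}\,r + 1, \qquad \beta = \frac{2j}{g}\,r + 1 ,
\]
which are genuine integers because $g=(j,2j+k)$ divides $j$ (hence $2j$) and divides $2j+k$. By hypothesis both are prime and coprime to $k$. By definition $n = 2j\,\alpha$, and a one-line rearrangement gives
\[
n+k = 2j\Big(\tfrac{2j+k}{g}r+1\Big)+k = (2j+k)\Big(\tfrac{2j}{g}r+1\Big) = (2j+k)\,\beta .
\]

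Next I would record the crucial consequence of the hypothesis that $j$ and $2j+k$ share their prime factors. If a prime $p$ divides $j$, then it divides $2j+k$ as well, hence it divides $(2j+k)-2j = k$; thus every prime factor of $j$ (equivalently of $2j+k$) divides $k$, i.e. $j |^* k$ and $(2j+k) |^* k$. Since $k$ is odd and $2j+k$ has the same prime factors as $j$, the integer $j$ is forced to be odd, so $\phi(2j)=\phi(2)\phi(j)=\phi(j)$. Because $\alpha$ and $\beta$ are coprime to $k$, the containments $j |^* k$ and $(2j+k) |^* k$ make them automatically coprime to $j$ and to $2j+k$; and a size estimate ($\alpha \ge (2j+k)/g + 1 > 2$, using $g\le j$) shows $\alpha$ is an odd prime, so $\alpha$ is coprime to $2j$ as well. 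This is exactly where the weaker hypothesis \emph{``prime with $k$''} plays the role that \emph{``not dividing $j$''} plays in the original $M=1$ statement.

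With the coprimalities in hand, multiplicativity of $\phi$ yields
\[
\phi(n) = \phi(2j)\,(\alpha-1) = \phi(2j)\,\tfrac{2j+k}{g}\,r , \qquad \phi(n+k) = \phi(2j+k)\,(\beta-1) = \phi(2j+k)\,\tfrac{2j}{g}\,r ,
\]
and I would then invoke $\phi(2j)=\phi(j)$ together with the identity $j\,\phi(2j+k) = (2j+k)\,\phi(j)$, which holds precisely because $j$ and $2j+k$ have the same prime factors. Taking the quotient,
\[
\frac{\phi(n+k)}{\phi(n)} = \frac{\phi(2j+k)}{\phi(2j)}\cdot\frac{2j}{2j+k} = \frac{\phi(2j+k)}{\phi(j)}\cdot\frac{2j}{2j+k} = \frac{2j+k}{j}\cdot\frac{2j}{2j+k} = 2 ,
\]
which is exactly $\phi(n+k)=2\phi(n)$. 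I expect the only real subtlety — the main obstacle — to be the justification of the coprimality conditions from the hypothesis ``prime with $k$'' alone; the heart of the matter is the observation that the prime factors of $j$ and of $2j+k$ all divide $k$, and that $j$ is forced odd. Once these are in place the factor $2$ drops out cleanly, since $\phi(2j)=\phi(j)$ while the remaining factors combine to $2j/j=2$.
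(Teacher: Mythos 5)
Your proof is correct and follows essentially the same route as the paper's: the same factorization $n+k = (2j+k)\bigl(\tfrac{2j}{g}r+1\bigr)$, evaluation of $\phi$ at the two auxiliary primes, and a closing identity equating $j\,\phi(2j+k)$ with $(2j+k)\,\phi(2j)$ up to the factor $2$ (the paper writes it as $2j\,\phi(2j+k)=\phi(4j(2j+k))=2\phi(2j)(2j+k)$, you as $\phi(2j)=\phi(j)$ combined with $j\,\phi(2j+k)=(2j+k)\,\phi(j)$ --- the same fact). Your explicit justification of the coprimality conditions and of the oddness of $j$ is a welcome addition that the paper leaves implicit.
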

\begin{proof}
We have that 
\begin{align*}
\phi(n+k) & = \phi\left(2j\left(\frac{2j+k}{g}r+1\right) + k\right) \\
    &= \phi \left( (2j+k) \left(\frac{2j}{g}r + 1\right) \right) \\
    &=2j\phi(2j+k) \frac{r}{g} \\
2\phi(n) & = 2\phi \left(2j\left(\frac{2j+k}{g}r+1\right)\right) \\
    &= 2\phi(2j) \phi\left(\frac{2j+k}{g}r+1\right) \\
    &= 2 \phi(2j)(2j+k) \frac{r}{g}.
\end{align*}
Now we conclude using that
\[
2j \phi(2j+k) = 2j \phi(2(2j+k)) = \phi(4j(2j+k)) =  2\phi(2j) (2j+k).
\]
\end{proof}
Motivated by explicit computations, we would like to find more general formulas that provide new families of solutions to \eqref{equation:M:2}. In practice, for $k \le 10^4$, we find always at least four solutions in the range $n \le 10^6$. The only $k$ that has exactly $4$ solutions is $k=6$, namely
\[
n \in \{4,6,7,10\}.
\] 
We increase the range of $n$ up to $10^8$ and we don't find more solutions. \\
We notice that $n=6$ is the solution given by Makowski and $n=10$ is the solution from Hasanalizade, while $n=4$ is obtained via our method with $a_l=2$. There is no known family of solutions providing $n=7$. In an attempt to find such a family, we prove the following result. 
\begin{proposition}
Assume that there exists $p$ prime such that $2p-1$ is prime, $(p,k)=1$, $(2p-1,k)=1$ and $(p-1)| \, k$. Then
\[
n= \frac{p}{p-1}k
\]
is a solution of \eqref{equation:M:2}.
\end{proposition}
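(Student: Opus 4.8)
The plan is to turn the claimed identity into a direct application of the multiplicativity of $\phi$, exactly in the spirit of Case~1 of Theorems~\ref{even:M:1} and \ref{odd:M:2}. First I would introduce $m = k/(p-1)$, which is a positive integer precisely because $(p-1)\mid k$. Rewriting the two quantities appearing in \eqref{equation:M:2} in terms of $m$, the candidate solution becomes
\[
n = \frac{p}{p-1}k = pm, \qquad n+k = \frac{2p-1}{p-1}k = (2p-1)m,
\]
so that everything is reduced to computing $\phi(pm)$ and $\phi((2p-1)m)$.

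Next I would record the coprimality facts needed to split these two values of $\phi$ across their factors. Since $m$ divides $k$, any prime dividing both $p$ and $m$ would divide both $p$ and $k$; the hypothesis $(p,k)=1$ then forces $(p,m)=1$. The identical argument, using $(2p-1,k)=1$ in place of $(p,k)=1$, gives $(2p-1,m)=1$. These are the only nontrivial verifications in the proof.

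Finally, with $p$ and $2p-1$ both prime and coprime to $m$, multiplicativity yields
\[
\phi(n) = \phi(pm) = (p-1)\phi(m), \qquad \phi(n+k) = \phi\big((2p-1)m\big) = (2p-2)\phi(m) = 2(p-1)\phi(m),
\]
and comparing the two displays gives $\phi(n+k) = 2\phi(n)$, as required. I do not expect any genuine obstacle: the computation is elementary once the substitution $k=(p-1)m$ is made, and the only point demanding a little care is the deduction of $(p,m)=1$ and $(2p-1,m)=1$ from the coprimality of $p$ and $2p-1$ with $k$, which is immediate because $m\mid k$. As a sanity check, taking $p=7$ and $k=6$ recovers the elusive solution $n=7$ that motivated the statement.
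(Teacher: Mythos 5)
Your proof is correct and is essentially the paper's own argument: the paper likewise rewrites $n = p\cdot\frac{k}{p-1}$ and $n+k = (2p-1)\cdot\frac{k}{p-1}$ and applies multiplicativity of $\phi$ to get $(2p-2)\phi\left(\frac{k}{p-1}\right) = 2(p-1)\phi\left(\frac{k}{p-1}\right)$. Your version merely makes explicit the coprimality checks $(p,m)=1$ and $(2p-1,m)=1$ that the paper leaves implicit in its ``immediate to check'' computation.
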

\begin{proof}
It is immediate to check that
    \begin{align*}
 \phi(n+k) & = \phi\left( k\frac{p}{p-1} +k \right) = \phi\left( k\frac{2p-1}{p-1} \right) = (2p-2) \phi\left(\frac{k}{p-1} \right), \\
  2\phi(n) & = 2\phi\left( k\frac{p}{p-1} \right) = 2(p-1) \phi\left(\frac{k}{p-1} \right).
 \end{align*}
\end{proof}
For $k=6$, we can take $p=7$ and obtain the solution $n =7 $. Unfortunately, this cannot be generalized to all even $k$, since condition $(p-1)| \, k$ is satisfied only for a finite number of $p$ and might happen that for none of them the other requirements are met (e.g., $k=10$). 

We observe that for any odd $k$, thanks to the previous proposition, we can take $p=2$ and recover the solution $n=2k$ given by Makowski.

Finally, we conclude giving one more way of building solutions in another special case.

\begin{proposition}
Let $k$ be an even positive integer. Assume that there exists $m$ such that $m|k$, $(m+2,k)=1$, $(m+4,k)=1$ and $\phi (m+2)=\phi(m+4)$. Then
\[
n = \frac{m+4}{m}k
\]
is a solution of \eqref{equation:M:2}.
\end{proposition}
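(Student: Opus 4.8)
The plan is to reduce both $\phi(n+k)$ and $2\phi(n)$ to the common expression $2\phi(m+2)\phi(k/m)$ by peeling off coprime factors via multiplicativity of $\phi$, and then to bridge the remaining gap with the hypothesis $\phi(m+2)=\phi(m+4)$.

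First I would fix the arithmetic setup. Since $m \mid k$, the quantity $q := k/m$ is a positive integer, $n = (m+4)q$ is an integer, and
\[
n+k = \frac{m+4}{m}k + k = \frac{2m+4}{m}k = 2(m+2)q.
\]
Next I would extract the parity information hidden in the hypotheses. Because $k$ is even and $(m+2,k)=1$, the integer $m+2$ must be odd, so $m$ is odd and $m+4$ is odd as well; consequently $q=k/m$ is even. Moreover, since $q \mid k$, the coprimality conditions $(m+2,k)=1$ and $(m+4,k)=1$ descend to $(m+2,q)=1$ and $(m+4,q)=1$.

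With these facts in hand I would evaluate the two sides separately by splitting off coprime parts. For the right-hand side, $(m+4,q)=1$ gives $\phi(n)=\phi\big((m+4)q\big)=\phi(m+4)\phi(q)$, hence $2\phi(n)=2\phi(m+4)\phi(q)$. For the left-hand side I would group $n+k=(m+2)\cdot(2q)$; since $m+2$ is odd and coprime to $q$ it is coprime to $2q$, so $\phi(n+k)=\phi(m+2)\phi(2q)$, and because $q$ is even (that is, $2 |^* q$) one has $\phi(2q)=2\phi(q)$. This yields $\phi(n+k)=2\phi(m+2)\phi(q)$. Comparing the two computations and invoking $\phi(m+2)=\phi(m+4)$ then gives $\phi(n+k)=2\phi(n)$, as required.

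The computation itself is entirely routine once this bookkeeping is in place, so the only step that genuinely requires care is the parity/coprimality bookkeeping: one must justify that $m$ is odd, that $q$ is even, and that $m+2$ and $m+4$ are each coprime to $q$, since every subsequent application of multiplicativity (and the identity $\phi(2q)=2\phi(q)$) rests on these observations. I expect no obstacle beyond making this explicit.
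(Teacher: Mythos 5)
Your proof is correct and follows essentially the same route as the paper: both rewrite $n+k = 2(m+2)\frac{k}{m}$, pull out the factor of $2$ using that $k/m$ is even, and split off $\phi(m+2)$ and $\phi(m+4)$ by coprimality before invoking $\phi(m+2)=\phi(m+4)$. The only difference is that you make explicit the parity and coprimality bookkeeping ($m$ odd, $k/m$ even, $(m+2,k/m)=(m+4,k/m)=1$) that the paper leaves implicit in its two-line computation.
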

\begin{proof}
Again, it is immediate to check that

 \begin{align*}
 \phi(n+k) & = \phi\left( k\frac{m+4}{m} +k \right) = 2\phi\left( k\frac{m+2}{m} \right) = 2 \phi(m+2)\phi\left(\frac{k}{m} \right), \\
  2\phi(n) & = 2\phi\left( k\frac{m+4}{m} \right) = 2 \phi(m+4)\phi\left(\frac{k}{m} \right) .
 \end{align*}
\end{proof}

\bibliographystyle{plain}
\bibliography{bibtex}{}

\end{document}